\newtheorem{theorem}{Theorem}[section]
\newtheorem{lemma}[theorem]{Lemma}
\newtheorem{proposition}[theorem]{Proposition}
\newtheorem{corollary}[theorem]{Corollary}
\theoremstyle{definition}
\newtheorem{definition}[theorem]{Definition}
\theoremstyle{remark}
\newtheorem{remark}[theorem]{Remark}
\numberwithin{equation}{section}
\begin{document}

\title{On the general no-three-in-line problem}

%    Information for first author
\author{T. Agama}
    %Address of record for the research reported here
\address{Department of Mathematics, African Institute for Mathematical science, Ghana
}
%    Current address
%\curraddr{Department of Mathematics and Statistics,
%{Case Western Reserve University, Cleveland, Ohio 43403}
\email{theophilus@aims.edu.gh/emperordagama@yahoo.com}
%    \thanks will become a 1st page footnote.
%\thanks{The first author was supported in part by NSF Grant \#000000.}

%    General info
\subjclass[2010]{Primary 52C10; Secondary 52C35}

\date{\today}

%\dedicatory{}

\keywords{points; collinear}

\begin{abstract}
In this paper, we show that the number of points that can be placed in the grid $n\times n\times \cdots \times n~(d~times)=n^d$ for all $d\in \mathbb{N}$ with $d\geq 2$ so that no three points are collinear satisfies the lower bound
\begin{align}
\gg n^{d-1}\sqrt[2d]{d}.\nonumber
\end{align}
This extends the result of the no-three-in-line problem to all dimension $d\geq 3$.
\end{abstract}

\maketitle

\section{Introduction}

The classical \emph{no-three-in-line} problem asks for the largest cardinality of a subset of the integer grid
$$
\{1,\dots,n\}\times\{1,\dots,n\}
$$
containing no three collinear points. Posed in recreational form by H. Dudeney and subsequently studied in a rigorous combinatorial setting, this question sits at the crossroads of discrete geometry and additive combinatorics and has motivated a range of constructive and extremal techniques. Roth and Erd\H{o}s made early asymptotic progress using density and combinatorial arguments; in particular, the methods of Roth and related combinatorial constructions give nontrivial lower bounds for large $n$ (see \cite{roth1951problem}). Subsequent refinements and explicit constructions improved these results in the planar case (see \cite{hall1975some}), and several authors have proposed natural conjectures for the optimal linear growth constant (see \cite{pegg2005math} and references therein). A natural higher-dimensional extension was studied by P\'or and Wood, who showed that in three dimensions one can place $\Theta(n^2)$ points in an $n\times n\times n$ grid with no three collinear \cite{por2007no}.\\

In this article, we extend the no-three-in-line problem to arbitrary dimension $d\geq 2$ and give an explicit constructive lower bound for the maximal number of lattice points in the $d$-dimensional box
$$
\underbrace{\{1,\ldots,n\}\times\cdots\times\{1,\ldots,n\}}_{d\text{ factors}}=:[n]^d
$$
that contain no three collinear points. Our main theorem asserts that one can select at least
$$
\gg n^{d-1}d^{1/(2d)}
$$
such points (the implied constant absolute), thereby extending the three-dimensional $\Theta(n^2)$ phenomenon to all higher dimensions with a quantitative dependence on $d$. The construction and counting arguments are elementary in spirit but take advantage of a new geometric viewpoint based on the \emph{compression method} introduced in \cite{agama2019compression}.\\

At the heart of the approach is a rescaling (reciprocal) map - the compression $\mathbb{V}_m$ of scale $m\in(0,1]$ - which acts coordinatewise by reciprocation against a fixed scale. Two closely related scalar statistics associated to a compressed point, which we call the \emph{mass} and the \emph{compression gap}, measure respectively the summed reciprocals of coordinates and a symmetric deviation between a point and its reciprocal image. These statistics permit two key geometric steps used throughout the paper\\

\begin{enumerate}

\item By fixing a target compression-gap magnitude one obtains a family of \emph{induced balls} whose boundary (the set of \emph{admissible} points) has the combinatorial property that no three admissible points are collinear. This geometric noncollinearity is a simple but robust property of the admissible locus (see Proposition \ref{No three collinear}).
\bigskip

\item Lattice-point counting on the smallest axis-parallel box that contains an induced ball produces the desired lower bound: roughly speaking, admissible points yield a large supply of lattice points distributed along a $(d-1)$-dimensional skeleton of the box, and a careful estimate of the local density yields the $n^{d-1}d^{1/(2d)}$ term.
\end{enumerate}
\bigskip

The compression viewpoint gives two advantages. First, it provides a uniform mechanism to build noncollinear configurations in every dimension by working with a single geometric object (an induced ball) and sampling its admissible lattice points. Second, because the mass and compression-gap are explicit analytic functions of the coordinates, one can transport coarse bounds on coordinate sizes to sharp counting estimates on admissible lattice points; this is the analytic core of the counting argument (see Proposition \ref{cgidentity} and Lemma \ref{gapestimate}).\\

\subsection{Organization of the paper.} In Section \ref{sec:prelim}, we introduce notation, recall the compression map $\mathbb{V}_m$ and its basic algebraic properties, and record elementary harmonic sums used later (Lemma \ref{elementary}). Section \ref{sec:massgap} defines the mass and the compression gap and establishes the upper and lower estimates that relate these quantities to the coordinate extrema (Proposition \ref{crucial} and Corollary \ref{compression gap estimate 2}). In Section \ref{sec:balls}, we define the ball induced by a point under compression, develop the nesting and limit-point structure of these balls (Theorems \ref{decider}, \ref{ballproof}, and \ref{limitexistence}), and introduce admissible points on induced balls together with their combinatorial properties (Theorem \ref{admissibletheorem} and Proposition \ref{No three collinear}).  Section \ref{sec:main} contains the counting argument and the proof of the main theorem; two short corollaries for $d=2$ and $d=3$ are stated at the end.\\

\textbf{Remarks and constants.} The method is constructive - admissible points are explicitly described - and the hidden constant in the $\gg$ notation is absolute and independent of $n$ (it may depend mildly, and only in auxiliary estimates, on negligible choices of the compression scale ($m=d\mapsto m(d)=o(1)$). We do not claim that the exponent on $d$ is optimal; rather, the bound exhibits the correct $n^{d-1}$ leading growth and supplies an explicit, dimension-dependent multiplicative factor that naturally emerges from the compression geometry.\\

\textbf{Notation.} Throughout $f(n)\gg g(n)$ means that there exists $c>0$ with $f(n)\geq c\,g(n)$ for all large $n$; subscripts indicate dependence on parameters when necessary (for example $f(n)\gg_s g(n)$). The asymptotic notation $\sim$ and $o(1)$ are standard.
\bigskip

\section{Preliminary results}\label{sec:prelim}

\begin{definition}
By compression of the fixed scale $m\in\mathbb{R}$ with $1\geq m>0$ on the points in $\mathbb{R}^{n}$, we mean the map $\mathbb{V}:(\mathbb{R}\setminus \{0\})^n\longrightarrow \mathbb{R}^n$ such that 
\begin{align}
\mathbb{V}_m[(x_1,x_2,\ldots, x_n)]=\bigg(\frac{m}{x_1},\frac{m}{x_2},\ldots,\frac{m}{x_n}\bigg)\nonumber
\end{align}
for $n\geq 2$ and with $x_i\neq x_j$ for $i\neq j$ and  $x_i\neq 0$ for all $i=1,\ldots,n$. 
\end{definition}

\begin{remark}
The notion of compression is, in some way, the process of rescaling points in $\mathbb{R}^n$ for $n\geq 2$. Thus, it is important to observe that a compression roughly speaking pushes points very close to the origin away from the origin by a certain scale and similarly draws points that are away from the origin close to the origin. Intuitively, compression induces some kind of motion on points in the Euclidean space $\mathbb{R}^n$ for $n\geq 2$.
\end{remark}

\begin{proposition}
A compression $\mathbb{V}_m:(\mathbb{R}\setminus 0)^n\longrightarrow \mathbb{R}^n$ of scale $m$ with $1\geq m>0$ is a bijective map.
\end{proposition}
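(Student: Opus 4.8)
The plan is to exhibit an explicit two-sided inverse for $\mathbb{V}_m$; in fact I expect $\mathbb{V}_m$ to be its own inverse, so that establishing bijectivity reduces to a single composition computation. First I would pin down the precise domain on which $\mathbb{V}_m$ acts. Although the statement writes $\mathbb{V}_m:\mathbb{R}^n\to\mathbb{R}^n$, the defining formula together with the conditions $x_i\neq 0$ and $x_i\neq x_j$ for $i\neq j$ means the honest domain is the set
\begin{align}
D=\{(x_1,\ldots,x_n)\in\mathbb{R}^n : x_i\neq 0 \text{ for all } i,\ x_i\neq x_j \text{ for } i\neq j\}.\nonumber
\end{align}
The first genuine step is therefore to check that $\mathbb{V}_m$ maps $D$ into $D$: since $m>0$ each coordinate $m/x_i$ is finite and nonzero, and because $m\neq 0$ the equality $m/x_i=m/x_j$ holds precisely when $x_i=x_j$, so distinctness of the coordinates is preserved. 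Hence $\mathbb{V}_m$ is a well-defined self-map of $D$.

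Next I would compute the composition $\mathbb{V}_m\circ\mathbb{V}_m$ on an arbitrary point of $D$. Applying the definition twice gives
\begin{align}
\mathbb{V}_m\big[\mathbb{V}_m[(x_1,\ldots,x_n)]\big]=\mathbb{V}_m\bigg[\bigg(\frac{m}{x_1},\ldots,\frac{m}{x_n}\bigg)\bigg]=\bigg(\frac{m}{m/x_1},\ldots,\frac{m}{m/x_n}\bigg)=(x_1,\ldots,x_n),\nonumber
\end{align}
so $\mathbb{V}_m\circ\mathbb{V}_m$ is the identity on $D$. A map that admits a two-sided inverse is automatically bijective, and here the inverse is $\mathbb{V}_m$ itself; this yields the claim in one stroke.

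If one prefers to avoid the involution argument, the same two ingredients can be repackaged as separate injectivity and surjectivity checks: injectivity follows because $m/x_i=m/y_i$ forces $x_i=y_i$ coordinatewise, and surjectivity follows because any target $(y_1,\ldots,y_n)\in D$ is hit by $(m/y_1,\ldots,m/y_n)$, which lies in $D$ by the well-definedness step above. I do not anticipate a serious obstacle here; the only point requiring care is the bookkeeping of the domain, since $\mathbb{V}_m$ is not defined on all of $\mathbb{R}^n$ and one must confirm that the image of a legal point is again legal before the involution identity is even meaningful.
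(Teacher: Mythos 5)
Your proof is correct, and it takes a somewhat different route from the paper's. The paper argues injectivity directly (from $m/x_i = m/y_i$ conclude $x_i = y_i$) and then simply asserts that ``surjectivity follows by definition of the map'' without exhibiting a preimage. You instead observe that $\mathbb{V}_m$ is an involution, i.e. $\mathbb{V}_m\circ\mathbb{V}_m$ is the identity on the honest domain $D$, so that $\mathbb{V}_m$ is its own two-sided inverse and bijectivity follows in one stroke. What your approach buys is precisely the piece the paper leaves implicit: an explicit preimage $(m/y_1,\ldots,m/y_n)$ for every target, together with the check that $\mathbb{V}_m$ actually maps $D$ into $D$ (nonzero coordinates stay nonzero, distinct coordinates stay distinct), without which the composition identity would not even be meaningful. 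Your fallback injectivity/surjectivity argument coincides with the paper's on the injectivity half and completes the surjectivity half. The one caveat worth keeping in mind is that neither you nor the paper can literally have a bijection of all of $\mathbb{R}^n$; your restriction to $D$ is the correct reading of the statement, and you are right to flag the domain bookkeeping as the only point requiring care.
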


\begin{proof}
Suppose that $\mathbb{V}_m[(x_1,x_2,\ldots, x_n)]=\mathbb{V}_m[(y_1,y_2,\ldots,y_n)]$. We get 
\begin{align}
\bigg(\frac{m}{x_1},\frac{m}{x_2},\ldots,\frac{m}{x_n}\bigg)=\bigg(\frac{m}{y_1},\frac{m}{y_2},\ldots,\frac{m}{y_n}\bigg).\nonumber
\end{align}
We deduce $x_i=y_i$ for each $i=1,2,\ldots, n$. Surjectivity follows by the definition of the map.
\end{proof}

\subsection{The mass of compression}\label{sec:massgap}

In this section, we introduce the notion of the mass of compression on points in space and study the associated statistics.

\begin{definition}\label{mass}
By the mass of a compression $\mathbb{V}_m$ of fixed scale $m$ with $1\geq m>0$, we mean the map $\mathcal{M}:\mathbb{R}^n\longrightarrow \mathbb{R}$ such that 
\begin{align}
\mathcal{M}(\mathbb{V}_m[(x_1,x_2,\ldots,x_n)])=\sum \limits_{i=1}^{n}\frac{m}{x_i}.\nonumber
\end{align}
\end{definition}
\bigskip

\begin{lemma}\label{elementary}
We have 
\begin{align}
\sum \limits_{n\leq x}\frac{1}{n}=\log x+\gamma+O\bigg(\frac{1}{x}\bigg)\nonumber
\end{align}
where $\gamma=0.5772\cdots $.
\end{lemma}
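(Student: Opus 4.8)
The final statement is Lemma~\ref{elementary}, the classical estimate for the harmonic sum
\[
\sum_{n\le x}\frac{1}{n}=\log x+\gamma+O\!\left(\frac{1}{x}\right).
\]
The plan is to prove this by comparing the partial sum of the harmonic series with the integral $\int_1^x \frac{dt}{t}=\log x$, which is the standard route via Euler--Maclaurin summation at first order (equivalently, Abel summation / partial summation against the step function). First I would write $\sum_{n\le x}\frac{1}{n}=\sum_{n\le N}\frac{1}{n}$ where $N=\lfloor x\rfloor$, since the summand is supported on integers, and note that replacing $x$ by $N$ changes $\log x$ by $O(1/x)$, so it suffices to handle integer $x=N$.

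The main step is to control the difference $\sum_{n\le N}\frac{1}{n}-\log N$. I would introduce the error term $\delta_n=\frac{1}{n}-\int_n^{n+1}\frac{dt}{t}=\frac{1}{n}-\log\!\left(\frac{n+1}{n}\right)$, which measures the gap between each rectangle and the area under $1/t$ on $[n,n+1]$. A quick convexity or Taylor estimate gives $0\le \delta_n\le \frac{1}{n}-\frac{1}{n+1}=\frac{1}{n(n+1)}$, so $\sum_{n\ge 1}\delta_n$ converges absolutely. Defining $\gamma:=\sum_{n\ge1}\delta_n$ (this is precisely the Euler--Mascheroni constant) and summing $\delta_n$ from $n=1$ to $N$ yields
\[
\sum_{n\le N}\frac{1}{n}-\log(N+1)=\sum_{n=1}^{N}\delta_n=\gamma-\sum_{n>N}\delta_n.
\]
Then I would bound the tail $\sum_{n>N}\delta_n\le \sum_{n>N}\frac{1}{n(n+1)}=\frac{1}{N+1}=O(1/N)$, and absorb the discrepancy between $\log(N+1)$ and $\log N$, which is $\log(1+1/N)=O(1/N)$, into the same error term. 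Collecting these gives $\sum_{n\le N}\frac{1}{n}=\log N+\gamma+O(1/N)$, and reverting from $N=\lfloor x\rfloor$ back to $x$ completes the proof.

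I do not expect a genuine obstacle here, since this is a textbook estimate; the only point requiring mild care is the bookkeeping of the three separate $O(1/x)$ contributions (the floor truncation, the tail of $\sum\delta_n$, and the $\log(N+1)$ versus $\log N$ shift), each of which must be verified to be uniformly $O(1/x)$ rather than merely $o(1)$. An alternative, fully equivalent approach would be to apply partial summation directly: write $\sum_{n\le x}\frac1n=\frac{\lfloor x\rfloor}{x}+\int_1^x\frac{\lfloor t\rfloor}{t^2}\,dt$ and split $\lfloor t\rfloor=t-\{t\}$, whereupon $\int_1^x\frac{dt}{t}=\log x$ emerges and the fractional-part integral $\int_1^\infty\frac{\{t\}}{t^2}\,dt$ converges to a constant with tail $O(1/x)$; this again identifies the constant as $\gamma=1-\int_1^\infty\frac{\{t\}}{t^2}\,dt$ and yields the same conclusion.
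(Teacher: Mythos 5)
Your proof is correct and complete: the decomposition via $\delta_n=\frac{1}{n}-\log\frac{n+1}{n}$ with the bound $0\le\delta_n\le\frac{1}{n(n+1)}$, the identification $\gamma=\sum_{n\ge1}\delta_n$, and the three $O(1/x)$ bookkeeping steps are all handled properly, and the Abel-summation alternative you sketch is equally valid. The paper itself offers no proof of this lemma at all — it is stated as a known classical estimate — so there is nothing to compare against; your argument is the standard textbook one and fills the gap the paper leaves open.
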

\bigskip

We prove an upper and lower bound for the mass of the compression of fixed scale $m$ with $1\geq m>0$.

\begin{proposition}\label{crucial}
Let $(x_1,x_2,\ldots,x_n)\in \mathbb{R}^n$ with $x_i\neq 0$ for each $1\leq i\leq n$ and $x_i\neq x_j$ for $i\neq j$. We have
\begin{align}
m\log \bigg(1-\frac{n-1}{\mathrm{sup}(x_j)}\bigg)^{-1} \ll \mathcal{M}(\mathbb{V}_m[(x_1,x_2,\ldots, x_n)])\ll m\log \bigg(1+\frac{n-1}{\mathrm{Inf}(x_j)}\bigg)\nonumber
\end{align}
for $n\geq 2$.
\end{proposition}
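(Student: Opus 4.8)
The plan is to exploit the pairwise distinctness of the coordinates together with the harmonic-sum asymptotics of Lemma~\ref{elementary}. First I would write $\mathcal{M}(\mathbb{V}_m[(x_1,\ldots,x_n)])=m\sum_{i=1}^{n}\tfrac{1}{x_i}$ and, using the convention recorded in the preamble that the tuple may be taken increasing, relabel the entries as order statistics $x_{(1)}<x_{(2)}<\cdots<x_{(n)}$, so that $\mathrm{Inf}(x_j)=x_{(1)}$ and $\mathrm{sup}(x_j)=x_{(n)}$. I would work in the positive, integrally-spaced regime relevant to the grid, where consecutive distinct coordinates differ by at least one unit; this is exactly the feature that turns ``distinct'' into usable separation.

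The heart of the argument is a two-sided spacing estimate. Since the $x_{(i)}$ are distinct and spaced by at least one, I would establish
\begin{align}
\mathrm{Inf}(x_j)+(i-1)\ \le\ x_{(i)}\ \le\ \mathrm{sup}(x_j)-(n-i),\qquad 1\le i\le n.\nonumber
\end{align}
Taking reciprocals (legitimate by positivity, which reverses the inequalities) and summing over $i$ then sandwiches the mass between two explicit harmonic sums,
\begin{align}
m\sum_{k=0}^{n-1}\frac{1}{\mathrm{sup}(x_j)-k}\ \le\ \mathcal{M}(\mathbb{V}_m[(x_1,\ldots,x_n)])\ \le\ m\sum_{k=0}^{n-1}\frac{1}{\mathrm{Inf}(x_j)+k}.\nonumber
\end{align}

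Next I would evaluate each bounding sum as a difference of two partial harmonic sums and apply Lemma~\ref{elementary}. For the upper sum this gives $\sum_{k=0}^{n-1}\tfrac{1}{\mathrm{Inf}(x_j)+k}=\log\tfrac{\mathrm{Inf}(x_j)+n-1}{\mathrm{Inf}(x_j)-1}+O\!\left(\tfrac{1}{\mathrm{Inf}(x_j)}\right)$, where the two Euler constants $\gamma$ cancel; rewriting $\log\tfrac{\mathrm{Inf}(x_j)+n-1}{\mathrm{Inf}(x_j)-1}\asymp\log\!\left(1+\tfrac{n-1}{\mathrm{Inf}(x_j)}\right)$ up to a bounded multiplicative constant absorbs the error into the $\ll$ relation. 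The lower sum is handled symmetrically, producing $\log\tfrac{\mathrm{sup}(x_j)}{\mathrm{sup}(x_j)-n+1}=\log\!\left(1-\tfrac{n-1}{\mathrm{sup}(x_j)}\right)^{-1}$ up to the same kind of absorbable constants. Multiplying through by $m$ yields the claimed two-sided bound.

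The main obstacle is the mismatch between the hypothesis, stated over $\mathbb{R}^n$, and the conclusion, whose logarithmic shape genuinely requires the unit spacing between consecutive coordinates: distinctness of arbitrary reals alone does not separate them, so without the grid/integrality assumption the spacing inequalities fail and no such bound can hold. Thus the delicate point is to justify that consecutive order statistics are separated by at least one (which holds in the intended grid setting) and, secondarily, to check that the cancellation of the constants $\gamma$ and the $O(1/\,\cdot\,)$ error terms really survives passage through the $\gg$ and $\ll$ symbols, so that the clean closed forms $\log(1+\tfrac{n-1}{\mathrm{Inf}(x_j)})$ and $\log(1-\tfrac{n-1}{\mathrm{sup}(x_j)})^{-1}$ emerge.
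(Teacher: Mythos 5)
Your proposal is essentially the paper's own proof: the paper likewise sandwiches $m\sum_{j=1}^{n}1/x_j$ between $m\sum_{k=0}^{n-1}1/(\mathrm{sup}(x_j)-k)$ and $m\sum_{k=0}^{n-1}1/(\mathrm{Inf}(x_j)+k)$ and then appeals to the harmonic-sum estimate of Lemma~\ref{elementary}, exactly as you do. The unit-spacing issue you flag is genuine --- the paper asserts these two bounds for arbitrary distinct nonzero reals with no justification at all --- so your write-up follows the same route while being more explicit about the hidden integrality assumption on which the whole argument rests.
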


\begin{proof}
Let $(x_1,x_2,\ldots,x_n)\in \mathbb{R}^n$ for $n\geq 2$ with $x_j\neq 0$. We get
\begin{align}
\mathcal{M}(\mathbb{V}_m[(x_1,x_2,\ldots, x_n)])&=m\sum \limits_{j=1}^{n}\frac{1}{x_j}\nonumber \\&\leq m\sum \limits_{k=0}^{n-1}\frac{1}{\mathrm{Inf}(x_j)+k}.\nonumber
\end{align}
We deduce for the lower bound
\begin{align}
\mathcal{M}(\mathbb{V}_m[(x_1,x_2,\ldots, x_n)])&=m\sum \limits_{j=1}^{n}\frac{1}{x_j}\nonumber \\&\geq m\sum \limits_{k=0}^{n-1}\frac{1}{\mathrm{sup}(x_j)-k}.\nonumber
\end{align}
\end{proof}

\begin{definition}\label{gap}
Let $(x_1,x_2,\ldots, x_n)\in \mathbb{R}^n$ with $x_i\neq 0$ for all $i=1,2\ldots,n$. By the \emph{compression gap} $\mathcal{G}\circ \mathbb{V}_m[(x_1,x_2,\ldots, x_n)]$ of the scale $m>0$, we mean the measure 
\begin{align}
\mathcal{G}\circ \mathbb{V}_m[(x_1,x_2,\ldots, x_n)]=\bigg|\bigg|\bigg(x_1-\frac{m}{x_1},x_2-\frac{m}{x_2},\ldots,x_n-\frac{m}{x_n}\bigg)\bigg|\bigg|.\nonumber
\end{align}
\end{definition}
\bigskip

\section{The ball induced by compression}\label{sec:balls}

In this section, we introduce the notion of the ball induced by a point $(x_1,x_2,\ldots,x_n)\in \mathbb{R}^n$ under compression of a given scale.

\begin{definition}
Let $(x_1,x_2,\ldots,x_n)\in \mathbb{R}^n$ with $x_i\neq x_j$ for all $1\leq i<j\leq n$ and $x_i\neq 0$ for all $1\leq i\leq n$. By the ball induced by $(x_1,x_2,\ldots,x_n)\in \mathbb{R}^n$ under compression of the scale $1\geq m>0$, denoted by $\mathcal{B}_{\frac{1}{2}\mathcal{G}\circ \mathbb{V}_m[(x_1,x_2,\ldots, x_n)]}[(x_1,x_2,\ldots,x_n)]$, we mean the inequality
\begin{align}
\left|\left|\vec{y}-\frac{1}{2}\bigg(x_1+\frac{m}{x_1},x_2+\frac{m}{x_2},\ldots,x_n+\frac{m}{x_n}\bigg)\right|\right|<\frac{1}{2}\mathcal{G}\circ \mathbb{V}_m[(x_1,x_2,\ldots, x_n)].\nonumber
\end{align}
A point $\vec{z}=(z_1,z_2,\ldots,z_n)\in \mathcal{B}_{\frac{1}{2}\mathcal{G}\circ \mathbb{V}_m[(x_1,x_2,\ldots, x_n)]}[(x_1,x_2,\ldots,x_n)]$ if it satisfies the inequality.
\end{definition}
\bigskip

 We will prove that smaller balls induced by points should essentially be covered by the bigger balls in which they are embedded. 
 
\begin{remark}
In the geometry of balls induced under compression of scale $m>0$, we will implicitly assume that 
\begin{align}
0<m\leq 1.\nonumber
\end{align}
For simplicity, we will choose to write the ball induced by the point $\vec{x}=(x_1,x_2,\ldots,x_n)$ under compression as \begin{align}\mathcal{B}_{\frac{1}{2}\mathcal{G}\circ \mathbb{V}_m[\vec{x}]}[\vec{x}].\nonumber
\end{align}
We adopt this notation to save enough work space in many circumstances.
\end{remark}

\begin{proposition}\label{cgidentity}
Let $(x_1,x_2,\ldots, x_n)\in \mathbb{R}^n$ for $n\geq 2$ with $x_j\neq 0$ for $j=1,\ldots,n$. We have 
\begin{align}
\mathcal{G}\circ \mathbb{V}_m[(x_1,x_2,\ldots, x_n)]^2=\mathcal{M}\circ \mathbb{V}_1\bigg[\bigg(\frac{1}{x_1^2},\ldots,\frac{1}{x_n^2}\bigg)\bigg]+m^2\mathcal{M}\circ \mathbb{V}_1[(x_1^2,\ldots,x_n^2)]-2mn.\nonumber
\end{align}
In particular, if $m=m(n)=o(1)$ as $n\longrightarrow \infty$, then
\begin{align}
\mathcal{G}\circ \mathbb{V}_m[(x_1,x_2,\ldots, x_n)]^2=\mathcal{M}\circ \mathbb{V}_1\bigg[\bigg(\frac{1}{x_1^2},\ldots,\frac{1}{x_n^2}\bigg)\bigg]-2mn+O\bigg(m^2\mathcal{M}\circ \mathbb{V}_1[(x_1^2,\ldots,x_n^2)]\bigg)\nonumber
\end{align}
for $\vec{x}\in \mathbb{R}^n$ with $x_i\geq 1$ for each $1\leq i\leq n.$
\end{proposition}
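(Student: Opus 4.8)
The plan is to reduce the statement to the definition of the gap of compression as a Euclidean norm and then simply expand. First I would unwind Definition~\ref{gap}: since $\mathcal{G}\circ \mathbb{V}_m[(x_1,\ldots,x_n)]$ is the norm of the vector whose $j$-th coordinate is $x_j-\frac{m}{x_j}$, squaring removes the norm and leaves
\[
\mathcal{G}\circ \mathbb{V}_m[(x_1,\ldots,x_n)]^2=\sum_{j=1}^{n}\bigg(x_j-\frac{m}{x_j}\bigg)^2.
\]
This is legitimate precisely because each $x_j\neq 0$, so every coordinate of the compressed vector is well defined.

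Next I would expand the binomial inside the sum. Using $\big(x_j-\frac{m}{x_j}\big)^2=x_j^2-2m+\frac{m^2}{x_j^2}$ and summing the $n$ copies of the constant $-2m$, this becomes
\[
\sum_{j=1}^{n}\bigg(x_j^2-2m+\frac{m^2}{x_j^2}\bigg)=\sum_{j=1}^{n}x_j^2+m^2\sum_{j=1}^{n}\frac{1}{x_j^2}-2mn.
\]
The whole identity is then just a matter of reading each of the two remaining sums through Definition~\ref{mass}. Since $\mathcal{M}\circ \mathbb{V}_1[(y_1,\ldots,y_n)]=\sum_{j=1}^{n}\frac{1}{y_j}$, applying this with $y_j=\frac{1}{x_j^2}$ gives $\sum_j x_j^2$, while applying it with $y_j=x_j^2$ gives $\sum_j \frac{1}{x_j^2}$. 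Substituting these two identifications yields exactly the claimed expression $\mathcal{M}\circ \mathbb{V}_1\big[\big(\frac{1}{x_1^2},\ldots,\frac{1}{x_n^2}\big)\big]+m^2\,\mathcal{M}\circ \mathbb{V}_1[(x_1^2,\ldots,x_n^2)]-2mn$.

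There is no substantive obstacle here: the argument is pure bookkeeping, and the one place where care is genuinely required is the double reciprocal in the first mass term, where $y_j=\frac{1}{x_j^2}$ produces $\frac{1}{y_j}=x_j^2$ rather than $\frac{1}{x_j^2}$; misreading this is the only way the computation could go wrong. Finally, for the asymptotic form I would restrict to $\vec{x}\in\mathbb{N}^n$ and observe that the term $m^2\,\mathcal{M}\circ \mathbb{V}_1[(x_1^2,\ldots,x_n^2)]=m^2\sum_j\frac{1}{x_j^2}$ is uniformly controlled by the convergence of the series of reciprocal squares, so it may be legitimately absorbed into an $O(\cdot)$ term, leaving the stated estimate.
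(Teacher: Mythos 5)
Your computation is correct, and it is worth noting that the paper itself states Proposition \ref{cgidentity} with no proof at all, so there is nothing to compare against: your direct expansion is exactly the verification the paper omits. Unwinding Definition \ref{gap} gives $\mathcal{G}\circ \mathbb{V}_m[\vec{x}]^2=\sum_{j=1}^{n}\big(x_j-\tfrac{m}{x_j}\big)^2=\sum_j x_j^2-2mn+m^2\sum_j x_j^{-2}$, and you correctly handle the one genuinely delicate point, namely that $\mathcal{M}\circ \mathbb{V}_1\big[\big(\tfrac{1}{x_1^2},\ldots,\tfrac{1}{x_n^2}\big)\big]$ is the sum of the reciprocals of the entries and hence equals $\sum_j x_j^2$, not $\sum_j x_j^{-2}$. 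The only cosmetic criticism is your justification of the second displayed estimate: invoking ``convergence of the series of reciprocal squares'' is neither needed nor quite apt (the $x_j$ are arbitrary distinct positive integers, not the initial segment $1,2,\ldots,n$); the estimate is immediate simply because rewriting a term $T$ as $O(T)$ is always legitimate, or, if one wants a concrete bound, because $x_j\geq 1$ gives $m^2\sum_j x_j^{-2}\leq m^2 n$. Neither issue affects correctness.
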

\bigskip

Proposition \ref{cgidentity} offers an extremely useful identity. It allows for passage from the \emph{compression gap} of the points under compression to the relative distance to the origin. It suggests that points under compression with a large gap must be far away from the origin than points with a relatively smaller gap under compression. That is to say, the inequality 
\begin{align}
\mathcal{G}\circ \mathbb{V}_m[\vec{x}]< \mathcal{G}\circ \mathbb{V}_m[\vec{y}]\nonumber
\end{align}
with $m:=m(n)=o(1)$ as $n\longrightarrow \infty$ if and only if $||\vec{x}||\lesssim ||\vec{y}||$ for $\vec{x}, \vec{y}\in \mathbb{R}^n$ with $x_i\geq 1$ for all $1\leq i\leq n$. This important transference principle will be applied to obtain our results. In particular, we can write the asymptotic 
$$
\mathcal{G}\circ \mathbb{V}_m[(x_1,x_2,\ldots, x_n)]^2\sim \mathcal{M}\circ \mathbb{V}_1\bigg[\bigg(\frac{1}{x_1^2},\ldots,\frac{1}{x_n^2}\bigg)\bigg]=||\vec{x}||^2.
$$

\begin{corollary}\label{compression gap estimate 2}
Let $(x_1,x_2,\ldots, x_n)\in \mathbb{R}^n$ for $n\geq 2$ with $x_j\neq x_i$ for $j\neq i$ and $x_i,x_j\geq 1$ for each $1\leq i,j\leq n$. If $m:=m(n)=o(1)$ as $n\longrightarrow \infty$, then
\begin{align}
\mathcal{G}\circ \mathbb{V}_m[(x_1,x_2,\ldots, x_n)]^2\geq n\mathrm{inf}(x_j^2)-2mn+O\bigg(m^2\mathcal{M}\circ \mathbb{V}_1[(x_1^2,\ldots,x_n^2)]\bigg)\nonumber
\end{align}
and 
\begin{align}
\mathcal{G}\circ \mathbb{V}_m[(x_1,x_2,\ldots, x_n)]^2\leq n\mathrm{sup}(x_j^2)-2mn+O\bigg(m^2\mathcal{M}\circ \mathbb{V}_1[(x_1^2,\ldots,x_n^2)]\bigg).\nonumber
\end{align}
\end{corollary}

\begin{lemma}[Compression estimate]\label{gapestimate}
Let $(x_1,x_2,\ldots, x_n)\in \mathbb{R}^n$ for $n\geq 2$ with $x_i\geq 1$ for all $1\leq i\leq n$ with $x_i\neq x_j$~($i\neq j$). If $m:=m(n)=o(1)$ as $n\longrightarrow \infty$, then 
\begin{align}
\mathcal{G}\circ \mathbb{V}_m[(x_1,x_2,\ldots, x_n)]^2\ll n\mathrm{sup}(x_j^2)+m^2\log \bigg(1+\frac{n-1}{\mathrm{inf}(x_j)^2}\bigg)-2mn\nonumber
\end{align} 
and 
\begin{align}
\mathcal{G}\circ \mathbb{V}_m[(x_1,x_2,\ldots, x_n)]^2\gg n\mathrm{inf}(x_j^2)+m^2\log \bigg(1-\frac{n-1}{\mathrm{sup}(x_j^2)}\bigg)^{-1}-2mn.\nonumber
\end{align}
\end{lemma}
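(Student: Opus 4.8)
The plan is to read off the three-term decomposition furnished by Proposition \ref{cgidentity} and then bound its two nonconstant pieces separately, always in the same direction, so that the constant $-2mn$ is carried through both estimates unchanged. Expanding the square in Definition \ref{gap} gives
\begin{align}
\mathcal{G}\circ \mathbb{V}_m[(x_1,\ldots,x_n)]^2=\sum_{j=1}^{n}\bigg(x_j-\frac{m}{x_j}\bigg)^2=\sum_{j=1}^{n}x_j^2+m^2\sum_{j=1}^{n}\frac{1}{x_j^2}-2mn,\nonumber
\end{align}
which is exactly the quantity $\mathcal{M}\circ\mathbb{V}_1[(1/x_1^2,\ldots,1/x_n^2)]+m^2\mathcal{M}\circ\mathbb{V}_1[(x_1^2,\ldots,x_n^2)]-2mn$ appearing in that proposition.

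First I would dispose of the term $\sum_{j=1}^{n}x_j^2$ by the trivial term-by-term bound: since $\mathrm{Inf}(x_j^2)\le x_j^2\le \mathrm{sup}(x_j^2)$ for each index, summing over the $n$ coordinates gives $n\,\mathrm{Inf}(x_j^2)\le \sum_{j=1}^{n}x_j^2\le n\,\mathrm{sup}(x_j^2)$. This supplies the $n\,\mathrm{sup}(x_j^2)$ summand of the upper estimate and the $n\,\mathrm{Inf}(x_j^2)$ summand of the lower estimate.

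Next I would apply Proposition \ref{crucial} to the tuple $(x_1^2,\ldots,x_n^2)$ with scale $1$, whose mass is precisely $\sum_{j=1}^{n}1/x_j^2$. Multiplying the resulting two-sided bound through by $m^2$ yields
\begin{align}
m^2\log\bigg(1-\frac{n-1}{\mathrm{sup}(x_j^2)}\bigg)^{-1}\ll m^2\sum_{j=1}^{n}\frac{1}{x_j^2}\ll m^2\log\bigg(1+\frac{n-1}{\mathrm{Inf}(x_j^2)}\bigg).\nonumber
\end{align}
Adding the upper bound here to the upper bound on $\sum x_j^2$ and subtracting $2mn$ gives the first asserted inequality (with $\mathrm{Inf}(x_j)^2$ read as $\mathrm{Inf}(x_j^2)$, which agree on grids of positive coordinates); adding the two lower bounds and subtracting $2mn$ gives the second.

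The point that needs the most care is the applicability of Proposition \ref{crucial} to the squared tuple: its hypotheses demand that the entries $x_j^2$ be nonzero and pairwise distinct, and while $x_j\ne 0$ immediately gives $x_j^2\ne 0$, distinctness of the squares requires $x_i\ne\pm x_j$ rather than merely $x_i\ne x_j$. For the intended application to grids with positive coordinates this is automatic, but in full generality one should either strengthen the hypothesis accordingly or absorb the finitely many coincidences into the implied constants. Beyond this, the proof is a mechanical combination of Proposition \ref{cgidentity}, the elementary bound on $\sum x_j^2$, and Proposition \ref{crucial}.
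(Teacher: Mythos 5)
The paper states this lemma without any proof, so there is nothing to compare against directly; your derivation --- expanding the square via Proposition \ref{cgidentity}, bounding $\sum_j x_j^2$ termwise between $n\,\mathrm{Inf}(x_j^2)$ and $n\,\mathrm{sup}(x_j^2)$, and applying Proposition \ref{crucial} to the tuple $(x_1^2,\ldots,x_n^2)$ --- is precisely the route the surrounding text evidently intends, and it is correct. Your caveat about the pairwise distinctness of the squares (needed to invoke Proposition \ref{crucial}) is well taken and is a genuine gap in the paper's hypotheses rather than in your argument.
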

\bigskip

\begin{remark}
We observe that the inequality in Corollary \ref{compression gap estimate 2} implies the inequalities in Lemma \ref{gapestimate}. At any given moment, we will decide which of the versions of these inequalities to use. Indeed, the inequalities in Corollary \ref{compression gap estimate 2} are more applicable to various problems than the one in Lemma \ref{gapestimate}.
\end{remark}

\begin{theorem}\label{decider}
Let $\vec{z}=(z_1,z_2,\ldots,z_n)\in \mathbb{R}^n$ with $z_i\neq z_j$ for all $1\leq i<j\leq n$ with $y_i,z_i\geq 1$ for all $1\leq i\leq n$ and $m:=m(n)=o(1)$ as $n\longrightarrow \infty$. Then  $\vec{z}\in \mathcal{B}_{\frac{1}{2}\mathcal{G}\circ \mathbb{V}_m[\vec{y}]}[\vec{y}]$ with $||\vec{z}||<||\vec{y}||$ if and only if 
\begin{align}
\mathcal{G}\circ \mathbb{V}_m[\vec{z}]\leq \mathcal{G}\circ \mathbb{V}_m[\vec{y}]\nonumber
\end{align}
with $||\vec{y}-\vec{z}||<\epsilon$ for some $\epsilon>0$.
\end{theorem}

\begin{proof}
Let $\vec{z}\in \mathcal{B}_{\frac{1}{2}\mathcal{G}\circ \mathbb{V}_m[\vec{y}]}[\vec{y}]$ for $\vec{z}=(z_1,z_2,\ldots,z_n)\in \mathbb{R}^n$ with $z_i\neq z_j$ for all $1\leq i<j\leq n$ and $z_i\geq 1$ for all $1\leq i\leq n$ such that $||\vec{y}||>||\vec{z}||$. Suppose that 
\begin{align}
\mathcal{G}\circ \mathbb{V}_m[\vec{z}]>\mathcal{G}\circ \mathbb{V}_m[\vec{y}].\nonumber
\end{align}
We deduce that $||\vec{y}||\lesssim||\vec{z}||$. This inequality cannot hold. In this case, we can take $\epsilon:=\frac{1}{2}\mathcal{G}\circ \mathbb{V}_m[\vec{y}]$. Conversely, suppose \begin{align}
\mathcal{G}\circ \mathbb{V}_m[\vec{z}]\leq \mathcal{G}\circ \mathbb{V}_m[\vec{y}].\nonumber
\end{align}
 By Proposition \ref{cgidentity}, we get $||\vec{z}||\lesssim||\vec{y}||$. Under the requirement $||\vec{y}-\vec{z}||<\epsilon$ for some $\epsilon>0$, we obtain the inequality
\begin{align}
\bigg|\bigg|\vec{z}-\frac{1}{2}\bigg(y_1+\frac{m}{y_1},\ldots,y_n+\frac{m}{y_n}\bigg)\bigg|\bigg|&\leq \bigg|\bigg|\vec{y}-\frac{1}{2}\bigg(y_1+\frac{m}{y_1},\ldots,y_n+\frac{m}{y_n}\bigg)\bigg|\bigg|+\epsilon\nonumber \\&=\frac{1}{2}\mathcal{G}\circ \mathbb{V}_m[\vec{y}]+\epsilon \nonumber
\end{align}
with $m=m(n)=o(1)$ as $n\longrightarrow \infty$. Choosing $\epsilon>0$ sufficiently small, we deduce $\vec{z}\in \mathcal{B}_{\frac{1}{2}\mathcal{G}\circ \mathbb{V}_m[\vec{y}]}[\vec{y}]$.
\end{proof}
\bigskip

In the geometry of balls under compression, we will assume that $n$ is sufficiently large for $\mathbb{R}^n$. In this regime, we will always take the scale of compression $m:=m(n)=o(1)$ as $n\longrightarrow \infty.$

\begin{theorem}\label{ballproof}
Let $\vec{x}=(x_1,x_2,\ldots,x_n)\in \mathbb{R}^n$ with $x_i\neq x_j$ for all $1\leq i<j\leq n$ with $y_i,x_i\geq 1$ for each $1\leq i\leq n$. If $\vec{y}\in \mathcal{B}_{\frac{1}{2}\mathcal{G}\circ \mathbb{V}_m[\vec{x}]}[\vec{x}]$ with $||\vec{y}||<||\vec{x}||$ for $||\vec{y}-\vec{x}||<\delta$ for sufficiently small $\delta>0$, then 
\begin{align}
\mathcal{B}_{\frac{1}{2}\mathcal{G}\circ \mathbb{V}_m[\vec{y}]}[\vec{y}]\subseteq \mathcal{B}_{\frac{1}{2}\mathcal{G}\circ \mathbb{V}_m[\vec{x}]}[\vec{x}]\nonumber
\end{align}
for $m:=m(n)=o(1)$ as $n\longrightarrow \infty.$
\end{theorem}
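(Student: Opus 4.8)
The plan is to reduce the claimed inclusion to a single transitivity statement about the gap functional $\mathcal{G}\circ\mathbb{V}_m$, using Theorem \ref{decider} as the bridge between membership in a compression ball and a comparison of gaps. The key observation is that Theorem \ref{decider} converts the geometric predicate ``$\vec{w}$ lies in the ball centred at $\vec{v}$'' into the purely numerical predicate $\mathcal{G}\circ\mathbb{V}_m[\vec{w}]<\mathcal{G}\circ\mathbb{V}_m[\vec{v}]$, and the latter relation is transitive, which is exactly what an inclusion of nested balls should reflect. So the whole argument should collapse to chaining two strict inequalities.

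First I would apply Theorem \ref{decider} to the hypothesis: since $\vec{y}\in\mathcal{B}_{\frac{1}{2}\mathcal{G}\circ\mathbb{V}_m[\vec{x}]}[\vec{x}]$, the ``only if'' direction gives
\begin{align}
\mathcal{G}\circ\mathbb{V}_m[\vec{y}]<\mathcal{G}\circ\mathbb{V}_m[\vec{x}].\nonumber
\end{align}
Next I would take an arbitrary point $\vec{z}\in\mathcal{B}_{\frac{1}{2}\mathcal{G}\circ\mathbb{V}_m[\vec{y}]}[\vec{y}]$ and apply Theorem \ref{decider} a second time, now with centre $\vec{y}$, to obtain $\mathcal{G}\circ\mathbb{V}_m[\vec{z}]<\mathcal{G}\circ\mathbb{V}_m[\vec{y}]$. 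Combining the two strict inequalities yields $\mathcal{G}\circ\mathbb{V}_m[\vec{z}]<\mathcal{G}\circ\mathbb{V}_m[\vec{x}]$, and a third application of Theorem \ref{decider}, this time in the reverse (``if'') direction with centre $\vec{x}$, places $\vec{z}$ in $\mathcal{B}_{\frac{1}{2}\mathcal{G}\circ\mathbb{V}_m[\vec{x}]}[\vec{x}]$. Since $\vec{z}$ was arbitrary, the inclusion follows.

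The main obstacle I anticipate is the admissibility hypotheses under which Theorem \ref{decider} is actually available. That theorem is stated only for lattice points in $\mathbb{N}^n$ with pairwise distinct coordinates, so the transitivity argument as written certifies the inclusion only for such admissible integer points; this is all that is needed for the grid application but is formally weaker than a genuine containment of the two solid balls in $\mathbb{R}^n$, and it also tacitly requires $\vec{y}$ itself to be admissible. To upgrade to the full real inclusion one would instead argue metrically: writing $c_{\vec{x}},c_{\vec{y}}$ for the two centres and $r_{\vec{x}}=\tfrac12\mathcal{G}\circ\mathbb{V}_m[\vec{x}]$, $r_{\vec{y}}=\tfrac12\mathcal{G}\circ\mathbb{V}_m[\vec{y}]$ for the radii, one would verify the condition $\lVert c_{\vec{x}}-c_{\vec{y}}\rVert\le r_{\vec{x}}-r_{\vec{y}}$ and then invoke the ordinary triangle inequality. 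Deriving that metric estimate from Proposition \ref{cgidentity} is where the real work would lie and is the step I expect to be delicate, so in the write-up I would either restrict the statement to admissible integer points or supply this estimate explicitly.
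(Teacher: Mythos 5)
Your proposal is correct and follows essentially the same route as the paper: the paper's proof is just the contrapositive packaging of your direct argument, using Theorem \ref{decider} to translate ball membership into the inequality $\mathcal{G}\circ \mathbb{V}_m[\vec{z}]<\mathcal{G}\circ \mathbb{V}_m[\vec{y}]<\mathcal{G}\circ \mathbb{V}_m[\vec{x}]$ and then chaining. Your closing caveat about Theorem \ref{decider} being stated only for points of $\mathbb{N}^n$ with distinct coordinates is a legitimate gap that the paper's own proof also glosses over, so you are, if anything, being more careful than the source.
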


\begin{proof}
 Let $\vec{y}\in \mathcal{B}_{\frac{1}{2}\mathcal{G}\circ \mathbb{V}_m[\vec{x}]}[\vec{x}]$ with $||\vec{y}||<||\vec{x}||$ for $||\vec{y}-\vec{x}||<\delta$. By Theorem \ref{decider}, we get $\mathcal{G}\circ \mathbb{V}_m[\vec{x}]\gtrsim \mathcal{G}\circ \mathbb{V}_m[\vec{y}]$ with $||\vec{y}-\vec{x}||<\delta$ for sufficiently small $\delta>0$. Consequently, the ball $\mathcal{B}_{\frac{1}{2}\mathcal{G}\circ \mathbb{V}_m[\vec{x}]}[\vec{x}]$ is slightly larger than the ball $\mathcal{B}_{\frac{1}{2}\mathcal{G}\circ \mathbb{V}_m[\vec{y}]}[\vec{y}]$ due to their compression gaps, and the latter ball does not contain the point $\vec{x}$ by construction. We deduce $||\mathbb{V}_m[\vec{y}]||>||\mathbb{V}_m[\vec{x}]||$ and 
\begin{align}
\mathcal{G}\circ \mathbb{V}_m[\mathbb{V}_m[\vec{y}]]&=\mathcal{G}\circ \mathbb{V}_m[\vec{y}]\nonumber \\&\lesssim \mathcal{G}\circ \mathbb{V}_m[\vec{x}]\nonumber \\&=\mathcal{G}\circ \mathbb{V}_m[\mathbb{V}_m[\vec{x}]]\nonumber
\end{align}
with $||\mathbb{V}_m[\vec{y}]-\mathbb{V}_m[\vec{x}]||<\epsilon$ for small $\epsilon>0$. This implies 
\begin{align}
\mathcal{B}_{\frac{1}{2}\mathcal{G}\circ \mathbb{V}_m[\vec{y}]}[\vec{y}]\subseteq \mathcal{B}_{\frac{1}{2}\mathcal{G}\circ \mathbb{V}_m[\vec{x}]}[\vec{x}].\nonumber
\end{align} 
\end{proof}

\begin{remark}
Theorem \ref{ballproof} suggests a nesting property of balls induced by points under compression
\end{remark}

\subsection{Interior points and the limit points of balls induced under compression}

In this section, we introduce the notion of an interior and the limit point of balls induced by points under compression. 

\begin{definition}
Let $\vec{y}=(y_1,y_2,\ldots,y_n)\in \mathbb{R}^n$ with $y_i\neq y_j$ for all $1\leq i<j\leq n$. A point $\vec{z}\in \mathcal{B}_{\frac{1}{2}\mathcal{G}\circ \mathbb{V}_m[\vec{y}]}[\vec{y}]$ is an \emph{interior} point if 
\begin{align}
\mathcal{B}_{\frac{1}{2}\mathcal{G}\circ \mathbb{V}_m[\vec{z}]}[\vec{z}]\subseteq \mathcal{B}_{\frac{1}{2}\mathcal{G}\circ \mathbb{V}_m[\vec{x}]}[\vec{x}]\nonumber
\end{align}
for most $\vec{x}\in \mathcal{B}_{\frac{1}{2}\mathcal{G}\circ \mathbb{V}_m[\vec{y}]}[\vec{y}]$. An interior point $\vec{z}$ is said to be a \emph{limit} point if 
\begin{align}
\mathcal{B}_{\frac{1}{2}\mathcal{G}\circ \mathbb{V}_m[\vec{z}]}[\vec{z}]\subseteq \mathcal{B}_{\frac{1}{2}\mathcal{G}\circ \mathbb{V}_m[\vec{x}]}[\vec{x}]\nonumber
\end{align}
for all $\vec{x}\in \mathcal{B}_{\frac{1}{2}\mathcal{G}\circ \mathbb{V}_m[\vec{y}]}[\vec{y}]$
\end{definition}
\bigskip

We will show that an interior and limit point exists in any ball induced by points under compression of any scale $m$ with $0<m\leq 1$ in any dimension.

\begin{theorem}\label{limitexistence}
Let $\vec{x}=(x_1,x_2,\ldots,x_n)\in \mathbb{R}^n$ with $x_i\neq x_j$ for all $1\leq i<j\leq n$ with $y_i\geq 1$ for all $1\leq i\leq n$. The ball $\mathcal{B}_{\frac{1}{2}\mathcal{G}\circ \mathbb{V}_m[\vec{x}]}[\vec{x}]$ contains an interior point and a limit point.
\end{theorem}

\begin{proof}
Let $\vec{x}=(x_1,x_2,\ldots,x_n)\in \mathbb{R}^n$ with $x_i\neq x_j$ for all $1\leq i<j\leq n$ with $x_i\geq 1$ for all $1\leq i\leq n$. Suppose that $\mathcal{B}_{\frac{1}{2}\mathcal{G}\circ \mathbb{V}_m[\vec{x}]}[\vec{x}]$ contains no limit point. Pick 
\begin{align}
\vec{z}_1\in \mathcal{B}_{\frac{1}{2}\mathcal{G}\circ \mathbb{V}_m[\vec{x}]}[\vec{x}]\nonumber
\end{align}
such that $z_{1_i}\geq 1$ for each $1\leq i\leq n$ with $||\vec{z}_1||<||\vec{x}||$ such that $||\vec{z}_1-\vec{x}||<\epsilon$ for sufficiently small $\epsilon>0$. By Theorem \ref{ballproof} and Theorem \ref{decider}, we get
\begin{align}
\mathcal{B}_{\frac{1}{2}\mathcal{G}\circ \mathbb{V}_m[\vec{z}_1]}[\vec{z}_1]\subset \mathcal{B} _{\frac{1}{2}\mathcal{G}\circ \mathbb{V}_m[\vec{x}]}[\vec{x}]\nonumber
\end{align}
with $\mathcal{G}\circ \mathbb{V}_m[\vec{z}_1]\lesssim \mathcal{G}\circ \mathbb{V}_m[\vec{x}]$. Again, we pick $\vec{z}_2\in \mathcal{B}_{\frac{1}{2}\mathcal{G}\circ \mathbb{V}_m[\vec{z}_1]}[\vec{z}_1]$ such that $z_{2_i}\geq 1$ for each $1\leq i\leq n$ with $||\vec{z}_2||<||\vec{z}_1||$ such that $||\vec{z}_2-\vec{z}_1||<\delta$ for sufficiently small $\delta>0$. By Theorem \ref{ballproof} and Theorem \ref{decider}, we deduce
\begin{align}
\mathcal{B}_{\frac{1}{2}\mathcal{G}\circ \mathbb{V}_m[\vec{z}_2]}[\vec{z}_2]\subset \mathcal{B}_{\frac{1}{2}\mathcal{G}\circ \mathbb{V}_m[\vec{z}_1]}[\vec{z}_1]\nonumber
\end{align}
with $\mathcal{G}\circ \mathbb{V}_m[\vec{z}_2]\lesssim \mathcal{G}\circ \mathbb{V}_m[\vec{z}_1]$. Continuing the argument in this way,  we obtain the infinite descending sequence of the compression gaps
\begin{align}
\mathcal{G}\circ \mathbb{V}_m[\vec{x}]\gtrsim \mathcal{G}\circ \mathbb{V}_m[\vec{z}_1]\gtrsim \mathcal{G}\circ \mathbb{V}_m[\vec{z}_2]\gtrsim \cdots \gtrsim \mathcal{G}\circ \mathbb{V}_m[\vec{z}_n]\gtrsim \cdots.\nonumber
\end{align}
\end{proof}

\begin{proposition}\label{found limit point}
The point $\vec{x}=(x_1,x_2,\ldots,x_n)$ with $x_i=1$ for each $1\leq i\leq n$ is the limit point of the ball $\mathcal{B}_{\frac{1}{2}\mathcal{G}\circ \mathbb{V}_1[\vec{y}]}[\vec{y}]$ for any $\vec{y}=(y_1,y_2,\ldots,y_n)\in \mathbb{R}^n$ with $y_i>1$ for each $1\leq i\leq n$.
\end{proposition}

\begin{proof}
Applying the compression $\mathbb{V}_1:\mathbb{R}^n\longrightarrow \mathbb{R}^n$ on the point $\vec{x}=(x_1,x_2,\ldots,x_n)$ with $x_i=1$ for each $1\leq i\leq n$, we obtain $\mathbb{V}_1[\vec{x}]=(1,1,\ldots,1)$ so that $\mathcal{G}\circ \mathbb{V}_1[\vec{x}]=0$ and the corresponding ball induced under compression $\mathcal{B}_{\frac{1}{2}\mathcal{G}\circ \mathbb{V}_1[\vec{x}]}[\vec{x}]$ contain only the point $\vec{x}$. By definition \ref{limitexistence}, the point $\vec{x}$ must be the limit point of the ball $\mathcal{B}_{\frac{1}{2}\mathcal{G}\circ \mathbb{V}_1[\vec{x}]}[\vec{x}]$. We deduce
\begin{align}
 \mathcal{B}_{\frac{1}{2}\mathcal{G}\circ \mathbb{V}_1[\vec{x}]}[\vec{x}]\subseteq \mathcal{B}_{\frac{1}{2}\mathcal{G}\circ \mathbb{V}_1[\vec{y}]}[\vec{y}]\nonumber 
\end{align}
for any $\vec{y}=(y_1,y_2,\ldots,y_n)\in \mathbb{R}^n$ with $y_i>1$ for all $1\leq i\leq n$. If we assume 
\begin{align}
 \mathcal{B}_{\frac{1}{2}\mathcal{G}\circ \mathbb{V}_1[\vec{x}]}[\vec{x}] \not \subseteq  \mathcal{B}_{\frac{1}{2}\mathcal{G}\circ \mathbb{V}_1[\vec{y}]}[\vec{y}]\nonumber    
\end{align}
holds for some $\vec{y}=(y_1,y_2,\ldots,y_n)\in \mathbb{R}^n$ with $y_i>1$ for each $1\leq i\leq n$, then there must exist some point $\vec{z}\in \mathcal{B}_{\frac{1}{2}\mathcal{G}\circ \mathbb{V}_1[\vec{x}]}[\vec{x}]$ such that $\vec{z}\not \in \mathcal{B}_{\frac{1}{2}\mathcal{G}\circ \mathbb{V}_1[\vec{y}]}[\vec{y}]$. Since $\vec{x}$ is the only point in the ball $\mathcal{B}_{\frac{1}{2}\mathcal{G}\circ \mathbb{V}_1[\vec{x}]}[\vec{x}]$, we get
\begin{align}
    \vec{x}\not \in \mathcal{B}_{\frac{1}{2}\mathcal{G}\circ \mathbb{V}_1[\vec{y}]}[\vec{y}]\nonumber
\end{align}
which is inconsistent with the fact that $\vec{x}$ is the limit point of the ball.
\end{proof}

\subsection{Admissible points of balls induced under compression}

We introduce the notion of \emph{admissible} points of balls induced by points under compression. 

\begin{definition}
Let $\vec{y}=(y_1,y_2,\ldots,y_n)\in \mathbb{R}^n$ with $y_i\neq y_j$ for all $1\leq i<j\leq n$. The point $\vec{y}$ is said to be an \emph{admissible} point of the ball $\mathcal{B}_{\frac{1}{2}\mathcal{G}\circ \mathbb{V}_m[\vec{x}]}[\vec{x}]$ if \begin{align}
\bigg|\bigg|\vec{y}-\frac{1}{2}\bigg(x_1+\frac{m}{x_1},\ldots,x_n+\frac{m}{x_n}\bigg)\bigg|\bigg|=\frac{1}{2}\mathcal{G}\circ \mathbb{V}_m[\vec{x}].\nonumber
\end{align}
\end{definition}
\bigskip

The admissible points of the balls induced by points under compression encompasses points on the ball. Geometrically, they sit on the outer part of the induced ball. Here, we will show that all balls can in principle be generated by their admissible points.

\begin{theorem}\label{admissibletheorem}
Let $\vec{x}\in \mathbb{R}^n$ with $x_i\neq x_j$~($i\neq j$) such that $x_i,y_i \geq 1$ for all $1\leq i\leq n$ and set $m:=m(n)=o(1)$ as $n\longrightarrow \infty$. The point $\vec{y}\in \mathcal{B}_{\frac{1}{2}\mathcal{G}\circ \mathbb{V}_m[\vec{x}]}[\vec{x}]$ with $||\vec{y}||<||\vec{x}||$ such that $||\vec{y}-\vec{x}||<\epsilon$ for sufficiently small $\epsilon>0$ is admissible if and only if 
\begin{align}
\mathcal{B}_{\frac{1}{2}\mathcal{G}\circ \mathbb{V}_m[\vec{y}]}[\vec{y}]=\mathcal{B}_{\frac{1}{2}\mathcal{G}\circ \mathbb{V}_m[\vec{x}]}[\vec{x}]\nonumber
\end{align}
and $\mathcal{G}\circ \mathbb{V}_m[\vec{y}]=\mathcal{G}\circ \mathbb{V}_m[\vec{x}]$.
\end{theorem}

\begin{proof}
We let  $\vec{y}\in \mathcal{B}_{\frac{1}{2}\mathcal{G}\circ \mathbb{V}_m[\vec{x}]}[\vec{x}]$ with $||\vec{y}||<||\vec{x}||$ so that $||\vec{y}-\vec{x}||<\epsilon$ for sufficiently small $\epsilon>0$  be admissible. Suppose that 
\begin{align}
\mathcal{B}_{\frac{1}{2}\mathcal{G}\circ \mathbb{V}_m[\vec{y}]}[\vec{y}]\neq \mathcal{B}_{\frac{1}{2}\mathcal{G}\circ \mathbb{V}_m[\vec{x}]}[\vec{x}].\nonumber
\end{align}
Without loss of generality, we can choose some $\vec{z}\in \mathcal{B}_{\frac{1}{2}\mathcal{G}\circ \mathbb{V}_m[\vec{x}]}[\vec{x}]$ with $||\vec{z}||<||\vec{x}||$ such that 
\begin{align}
\vec{z}\notin \mathcal{B}_{\frac{1}{2}\mathcal{G}\circ \mathbb{V}_m[\vec{y}]}[\vec{y}].\nonumber
\end{align}
for $||\vec{z}-\vec{x}||<\delta$ for sufficiently small $\delta>0$. Applying Theorem \ref{decider}, we get 
\begin{align}
\mathcal{G}\circ \mathbb{V}_m[\vec{y}]\lesssim \mathcal{G}\circ \mathbb{V}_m[\vec{x}].\nonumber
\end{align}
This already violates the equality $\mathcal{G}\circ \mathbb{V}_m[\vec{y}]=\mathcal{G}\circ \mathbb{V}_m[\vec{x}]$. The latter equality of compression gaps follows from the requirement that the balls are indistinguishable. Conversely, suppose 
\begin{align}
\mathcal{B}_{\frac{1}{2}\mathcal{G}\circ \mathbb{V}_m[\vec{y}]}[\vec{y}]=\mathcal{B}_{\frac{1}{2}\mathcal{G}\circ \mathbb{V}_m[\vec{x}]}[\vec{x}]\nonumber
\end{align}
and $\mathcal{G}\circ \mathbb{V}_m[\vec{y}]=\mathcal{G}\circ \mathbb{V}_m[\vec{x}]$. It follows that the point $\vec{y}$ lives on the outer part of the two indistinguishable balls and so must satisfy the equality
\begin{align}
\bigg|\bigg|\vec{z}-\frac{1}{2}\bigg(y_1+\frac{m}{y_1},\ldots,y_n+\frac{m}{y_n}\bigg)\bigg|\bigg|&=\bigg|\bigg|\vec{z}-\frac{1}{2}\bigg(x_1+\frac{m}{x_1},\ldots,x_n+\frac{m}{x_n}\bigg)\bigg|\bigg|\nonumber \\&=\frac{1}{2}\mathcal{G}\circ \mathbb{V}_m[\vec{x}].\nonumber
\end{align}
We deduce
\begin{align}
\frac{1}{2}\mathcal{G}\circ \mathbb{V}_m[\vec{x}]&=\bigg|\bigg|\vec{y}-\frac{1}{2}\bigg(x_1+\frac{m}{x_1},\ldots,x_n+\frac{m}{x_n}\bigg)\bigg|\bigg|\nonumber
\end{align}
and the point $\vec{y}$ is admissible.
\end{proof}
\bigskip

\begin{proposition}\label{No three collinear}
No three admissible points on the ball $\mathcal{B}_{\frac{1}{2}\mathcal{G}\circ \mathbb{V}_m[\vec{x}]}[\vec{x}]$ are collinear.
\end{proposition}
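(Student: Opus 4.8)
The plan is to read off directly from the definition that a point $\vec{y}$ is an admissible point of $\mathcal{B}_{\frac{1}{2}\mathcal{G}\circ \mathbb{V}_m[\vec{x}]}[\vec{x}]$ exactly when it lies on the bounding sphere
\begin{align}
S=\left\{\vec{y}\in \mathbb{R}^n : \left|\left|\vec{y}-\vec{c}\right|\right|=r\right\},\nonumber
\end{align}
where $\vec{c}=\frac{1}{2}\left(x_1+\frac{m}{x_1},\ldots,x_n+\frac{m}{x_n}\right)$ is the center and $r=\frac{1}{2}\mathcal{G}\circ \mathbb{V}_m[\vec{x}]$ is the radius appearing in the ball. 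Since the admissible points form a subset of $S$, it suffices to prove the stronger assertion that no three distinct points of $S$ are collinear, equivalently that a straight line in $\mathbb{R}^n$ meets $S$ in at most two points.

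To establish this I would argue by contradiction. Suppose three distinct admissible points $\vec{p},\vec{q},\vec{s}\in S$ lie on a common line $L$. As the points are distinct, $L$ carries a nonzero direction vector $\vec{v}$, so I may parametrize $L$ as $\vec{r}(\lambda)=\vec{p}+\lambda\vec{v}$ with $\lambda\in\mathbb{R}$. Substituting into the defining equation $\left|\left|\vec{r}(\lambda)-\vec{c}\right|\right|^2=r^2$ of the sphere and expanding the inner product yields
\begin{align}
\left|\left|\vec{v}\right|\right|^2\lambda^2+2\langle \vec{v},\,\vec{p}-\vec{c}\rangle\lambda+\left|\left|\vec{p}-\vec{c}\right|\right|^2-r^2=0.\nonumber
\end{align}

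Because $\vec{v}\neq\vec{0}$ the leading coefficient $\left|\left|\vec{v}\right|\right|^2$ is strictly positive, so this is a genuine quadratic in $\lambda$ and hence possesses at most two distinct real roots. Each of $\vec{p},\vec{q},\vec{s}$ corresponds to its own value of $\lambda$ solving this equation, and distinctness of the points forces three distinct parameter values, contradicting the fact that a quadratic has at most two roots. Therefore no three admissible points can be collinear. The only points requiring genuine care are the bookkeeping that identifies the admissible points precisely with the sphere $S$ (reading off the correct center and radius from the defining inequality) and the observation that distinct points on $L$ give distinct values of $\lambda$; the root count itself is entirely elementary, so I expect no serious obstacle beyond this identification.
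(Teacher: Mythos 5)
Your argument is correct. Note that the paper itself supplies no proof of this proposition: it is followed only by a remark asserting that the statement is ``a known result'' because the ball's boundary is a geometric sphere. Your proposal is exactly the justification the paper leaves implicit --- you identify the admissible points (by definition, those $\vec{y}$ satisfying the equality $\lVert\vec{y}-\vec{c}\rVert=r$ with the stated center and radius) as a subset of the bounding sphere, and then show a line meets that sphere in at most two points because the substituted equation is a quadratic in $\lambda$ with strictly positive leading coefficient $\lVert\vec{v}\rVert^2$. The two auxiliary observations you flag (admissible points sit on $S$, and distinct points of $L$ give distinct parameter values since $\vec{v}\neq\vec{0}$) are both immediate, so the proof is complete and, if anything, more self-contained than what the paper offers.
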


\section{Main result}\label{sec:main}

In this section, we prove the main result of this paper.

\begin{theorem}
The number of points that can be placed in the grid $n\times n\times \cdots \times n~(d~times)=n^d$ for all $d\in \mathbb{N}$ and with $d\geq 2$ such that no three points are collinear satisfies the lower bound
\begin{align}
\gg n^{d-1}\sqrt[2d]{d}.\nonumber
\end{align}
\end{theorem}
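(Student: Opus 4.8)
The plan is to realise a large no-three-in-line configuration as a set of grid points lying on a single sphere, and then to count how many lattice points such a sphere can contain. The starting point is Proposition \ref{No three collinear}: the admissible points of a fixed ball $\mathcal{B}_{\frac{1}{2}\mathcal{G}\circ \mathbb{V}_m[\vec{x}]}[\vec{x}]$ lie on its bounding sphere, and since a line meets a sphere in at most two points, no three of them are collinear. By Theorem \ref{admissibletheorem} the admissible points of a ball all share a common value of the compression gap, so I will in fact work with the clean geometric fact underlying that proposition: any family of grid points lying on a common sphere contains no three collinear. The problem thus reduces to exhibiting a single sphere through $\gg_d n^{d-1}\sqrt[2d]{d}$ points of the grid $\{1,2,\ldots,n\}^{d}$.

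To make contact with the compression machinery I would fix the scale $m$ small and restrict to the $n(n-1)\cdots(n-d+1)\gg_d n^{d}$ grid points with pairwise distinct coordinates, since the compression is only defined there. By Proposition \ref{cgidentity}, for such an integral point $\vec{y}$ one has $\mathcal{G}\circ \mathbb{V}_m[\vec{y}]^{2}=\sum_{i=1}^{d}y_i^{2}-2md+O\!\left(m^{2}\sum_{i=1}^{d}y_i^{-2}\right)$, so up to an error that is $O(m)$ uniformly over the grid the gap is governed by the integer $\sum_{i=1}^{d}y_i^{2}$, i.e. by which sphere about the origin contains $\vec{y}$. Partitioning the grid points by the value of $\sum_{i=1}^{d}y_i^{2}$ and applying the pigeonhole principle, Lemma \ref{gapestimate} shows this quantity ranges over an interval of length $\ll_d n^{2}$, so some level set carries $\gg_d n^{d}/n^{2}=n^{d-2}$ of the points, and these already form a no-three-in-line configuration.

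The main obstacle is precisely that this crude pigeonhole only delivers $n^{d-2}$, while the target is $n^{d-1}\sqrt[2d]{d}$, a gap of a factor $\approx n\sqrt[2d]{d}$. To close it I would not spread the points over all radii uniformly but instead choose the admissible radius $r=\tfrac12\mathcal{G}\circ \mathbb{V}_m[\vec{x}]$ so that the sphere $\sum_{i=1}^{d}y_i^{2}=N$ sits in the range where the number of representations of $N$ as a sum of $d$ squares is largest; Lemma \ref{gapestimate} places this radius at order $\sqrt{d}\,n$, where the sphere has surface measure of order $\omega_{d-1}r^{d-1}$, and the precise $d$-dependent constant $\sqrt[2d]{d}$ is to be extracted from the $\Gamma$-factor in $\omega_{d-1}$ after the appropriate normalisation. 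The delicate point — and the step I expect to resist a clean argument — is showing that a single sphere actually captures a number of grid points comparable to its full surface measure $\sim r^{d-1}$ rather than to the thickness-one shell count $\sim r^{d-2}$ that the naive heuristic and the known behaviour of sums of two or three squares suggest. Controlling this discrepancy rigorously, rather than heuristically through the volume of a spherical shell, is the heart of the matter.
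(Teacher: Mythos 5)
Your reduction is faithful to the paper's strategy: the paper likewise fixes a single ball induced under compression, observes via Proposition \ref{No three collinear} that its admissible points (the lattice points on the bounding sphere, all sharing one value of $\mathcal{G}\circ\mathbb{V}_1$ by Theorem \ref{admissibletheorem}) contain no three collinear points, and counts them. But the step you flag as ``the heart of the matter'' is not merely delicate --- it is false, and this is a genuine gap that cannot be closed. The number of points of $\{1,\ldots,n\}^d$ on a single sphere $\sum_{i=1}^{d}y_i^{2}=N$ is the representation number $r_d(N)$ with $N\ll_d n^{2}$, and one has $r_2(N)\ll N^{\epsilon}$, $r_3(N)\ll N^{1/2+\epsilon}$, and $r_d(N)\ll_d N^{d/2-1}$ for $d\geq 4$; in every case this is $O_{d,\epsilon}(n^{d-2+\epsilon})$. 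So no single sphere meets the grid in more than $n^{d-2+o(1)}$ points, and the pigeonhole bound $\gg_d n^{d-2}$ you derive is essentially the ceiling of this approach, a full factor of $n$ below the claimed $n^{d-1}\sqrt[2d]{d}$. The surface-measure heuristic $\omega_{d-1}r^{d-1}$ counts lattice points in a shell of thickness $\asymp 1$ about the sphere, i.e., in a union of $\asymp r$ distinct spheres; points drawn from different spheres of that shell do not inherit the no-three-collinear property, so the $\Gamma$-factor normalisation you propose has no admissible configuration to attach to.

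For what it is worth, the paper's own proof stumbles at exactly this same point: after restricting to admissible points with $\mathcal{G}\circ\mathbb{V}_1[\vec{x_j}]=n^{d}$, it silently enlarges the index set to all grid points of $n^{d}\cap\mathcal{B}_{\frac{1}{2}\mathcal{G}\circ\mathbb{V}_1[\vec{x}]}[\vec{x}]$ and then asserts the resulting sum is $\gg n^{d}$ times its minimal term, which replaces a sphere count by a volume count. Your instinct about where the difficulty lives is therefore exactly right; neither your proposal nor the paper resolves it, and for the one-sphere construction the true count is $n^{d-2+o(1)}$, not $n^{d-1}\sqrt[2d]{d}$.
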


\begin{proof}
Let $m:=m(d)=o(1)$ as $d\longrightarrow \infty$ and choose a point $\vec{x}\in \mathbb{R}^d$ such that $\mathcal{G}\circ \mathbb{V}_1[\vec{x}]\sim n^d$ for a fixed $n$. We note that such a point exists; that is, we choose $\vec{x}$ such that the largest coordinate $\mathrm{sup}(x_i)_{i=1}^{d}=n^d+o(1)$ and the smallest coordinate $\mathrm{inf}(x_i)_{i=1}^{d}\geq 1$. We apply the compression  $\mathbb{V}_m$ on the point $\vec{x}$ and construct the induced ball
\begin{align}
\mathcal{B}_{\frac{1}{2}\mathcal{G}\circ \mathbb{V}_m[\vec{x}]}[\vec{x}].\nonumber
\end{align}
Due to the restriction $\mathcal{G}\circ \mathbb{V}_m[\vec{x}]\sim n^d$ all admissible points $\vec{x_k}$ for $\vec{x_k}\neq \vec{x}$ with $||\vec{x}_k-\vec{x}||<\epsilon$ for small $\epsilon>0$ on the ball have the property that 
\begin{align}
\mathcal{G}\circ \mathbb{V}_m[\vec{x}]=\mathcal{G}\circ \mathbb{V}_m[\vec{x_k}]\sim n^d\nonumber
\end{align}
with 
$$
\mathcal{B}_{\frac{1}{2}\mathcal{G}\circ \mathbb{V}_m[\vec{x}]}[\vec{x}]=\mathcal{B}_{\frac{1}{2}\mathcal{G}\circ \mathbb{V}_m[\vec{x}_k]}[\vec{x}_k]
$$ 
by virtue of Theorem \ref{admissibletheorem}. Again for the ball $\mathcal{B}_{\frac{1}{2}\mathcal{G}\circ \mathbb{V}_m[\vec{x}_k]}[\vec{x}_k]$, we choose an admissible point $\vec{x}_l$ such that $\vec{x}_l\neq \vec{x}_k$ with $||\vec{x}_l-\vec{x}_k||<\epsilon$ for the same choice of small $\epsilon>0$ but with $||\vec{x}_l-\vec{x}||\geq \epsilon$. We have 
\begin{align}
\mathcal{G}\circ \mathbb{V}_m[\vec{x}_l]=\mathcal{G}\circ \mathbb{V}_m[\vec{x}_k]\sim n^d\nonumber
\end{align}
with 
$$
\mathcal{B}_{\frac{1}{2}\mathcal{G}\circ \mathbb{V}_m[\vec{x}_l]}[\vec{x}_l]=\mathcal{B}_{\frac{1}{2}\mathcal{G}\circ \mathbb{V}_m[\vec{x}_k]}[\vec{x}_k]
$$ 
by virtue of Theorem \ref{admissibletheorem}. This process can be iterated. It is seen that for any admissible point $\vec{x}_j$ on the ball so constructed $\mathcal{B}_{\frac{1}{2}\mathcal{G}\circ \mathbb{V}_m[\vec{x}]}[\vec{x}]$, we must have 
$$
\mathcal{G}\circ \mathbb{V}_m[\vec{x}_j]\sim n^d.
$$ 
We note that we only need to carry out this iteration for points with coordinates $\geq 1$, since compression images of admissible points are also admissible. We construct the smallest $d$ dimensional box that contains this ball. In this box, we construct the $n\times n\times \cdots \times n~(d~times)=n^d$ grid. We only consider admissible points of the ball that are on the constructed grid. In the grid $n\times n\times \cdots \times n~(d~times)=n^d$ for all $d\in \mathbb{N}$ with $d\geq 2$, the number of points that can be arranged in such a way that no three are collinear can be lower bounded by counting only the number of admissible points on the ball so constructed and on the grid constructed by Proposition \ref{No three collinear}. We deduce the lower bound
\begin{align}
&\geq \sum \limits_{\substack{\vec{x_j}\in n^d\\\sqrt[d]{\mathcal{G}\circ \mathbb{V}_m[\vec{x_j}]}=n}}1\nonumber \\&=\sum \limits_{\substack{\vec{x_j}\in n^d\\\mathrm{inf}(x_{j_i})_{i=1}^{d}\geq 1\\\mathrm{sup}(x_{j_i})_{i=1}^{d}=n^d+o(1)}}\frac{\sqrt[d]{\mathcal{G}\circ \mathbb{V}_m[\vec{x_j}]}}{n}\nonumber \\&\gg \frac{1}{n}\sum \limits_{\substack{\vec{x_j}\in n^d\\\mathrm{inf}(x_{j_i})_{i=1}^{d}\geq 1\\\mathrm{sup}(x_{j_i})_{i=1}^{d}=n^d+o(1)}}\sqrt[2d]{d}\sqrt[2d]{(\mathrm{inf}(x_{j_i})_{i=1}^{d})^2}\nonumber \\&\geq \frac{\sqrt[2d]{d}}{n}\sum \limits_{\vec{x_j}\in n^d}1\nonumber \\&=n^{d-1}\sqrt[2d]{d}.\nonumber
\end{align}
The claimed lower bound follows as a consequence.
\end{proof}

\begin{corollary}
The number of points that can be placed in the grid $n\times n$ such that no three points are collinear satisfies the lower bound
\begin{align}
\geq nC_1\sqrt[4]{2}\nonumber
\end{align}
for some $C_1>0$.
\end{corollary}
\bigskip

\begin{corollary}
The number of points that can be placed in the grid $n\times n\times n$ such that no three points are collinear satisfies the lower bound
\begin{align}
\geq n^2C_2\sqrt[6]{3}\nonumber
\end{align}
for some $C_2>0$.
\end{corollary}

%%%%%%%%%%%%%%%%%%%%%%%%%%%%%%%%%%%%%%%%%%%%%%%%%%%%%%%%%%%%%%%%%%%%%%%%
\footnote{
\par
.}%
%%%%%%%%%%%%%%%%%%%%%%%%%%%%%%%%%%%%%%%%%%%%%%%%%%%%%%%%%%%%%%%%%%%%%%%%
.

\bibliographystyle{amsplain}

\end{document}